\newcommand{\HOX}[1]{\marginpar{\footnotesize #1}}
\renewcommand{\emptyset}{\varnothing}
\newcommand{\f}{\frac}
\newcommand{\rootvertex}{\varnothing}
\newcommand{\ind}[1]{\mathbf{1}{\{ #1 \}}}
\newcommand{\abs}[1]{|#1 |}
\newcommand{\wt}{\widetilde}
\newcommand{\Iu}{I^\uparrow}
\newcommand{\Id}{I^\downarrow}
\newcommand{\Ju}{J^\uparrow}
\newcommand{\Jd}{J^\downarrow}
\DeclareMathOperator{\Ber}{Ber}
\DeclareMathOperator{\Bin}{Bin}
\DeclareMathOperator{\RFM}{RFM}
\DeclareMathOperator{\SFM}{SFM}
\DeclareMathOperator{\FM}{FM}
\newtheorem{thm}{Theorem}
\newtheorem{lemma}[thm]{Lemma}
\newtheorem{prop}[thm]{Proposition}
\theoremstyle{remark}
\newtheorem{remark}[thm]{Remark}
\theoremstyle{definition}
\title[]{The frog model on trees with drift}
\author[]{Erin Beckman}
\address{Department of Mathematics, Duke University}
\email{ebeckman@duke.edu}
\author[]{Natalie Frank}
\address{Department of Mathematics, New York University}
\email{nf1066@nyu.edu}
\author[]{Yufeng Jiang}
\address{Department of Mathematics, Duke University}
\email{yufeng.jiang@duke.edu}
\author[]{Matthew Junge}
\address{Department of Mathematics, Duke University}
\email{jungem@math.duke.edu}
\author[]{Si Tang}
\address{Department of Mathematics, Duke University}
\email{si.tang@duke.edu}
\begin{document}

\maketitle

\begin{abstract}
	We provide a uniform upper bound on the minimal drift so that the one-per-site frog model on a $d$-ary tree is recurrent. 
To do this, we introduce a subprocess that couples across trees with different degrees. Finding couplings for frog models on nested sequences of graphs is known to be difficult. The upper bound comes from combining the coupling with a new, simpler proof that the frog model on a binary tree is recurrent when the drift is sufficiently strong. Additionally, we describe a coupling between frog models on trees for which the degree of the smaller tree divides that of the larger one. This implies that the critical drift has a limit as $d$ tends to infinity along certain subsequences.
\end{abstract}

\section{Introduction}

We study the one-per-site \emph{frog model with drift} on the rooted $d$-ary tree $\mathbb T_d$.  Initially there is a single awake frog at the root and one sleeping frog at each non-root vertex. Awake frogs move towards the root with probability $p$, and otherwise move away from the root to a uniformly sampled child vertex. Frogs at the root always move to a uniformly sampled child vertex. Whenever an awake frog visits a site with a sleeping frog, the sleeping frog wakes up and begins its own independent $p$-biased random walk. Denote this process by $\FM(d,p)$ and the total number of visits to the root by $V(d,p)$.
The process is \emph{recurrent} if $V(d,p)$ is infinite almost surely, and is otherwise \emph{transient}.
The almost sure requirement is not overly stringent, because the probabilty the root is visited infinitely often in $\FM(d,p)$ satisfies a 0-1 law by Kosygnia and Zerner's general result \cite[Theorem 1]{kosygina01}.

There is a history of investigating recurrence for the frog model with drift. It was first studied by Gantert and Schmidt with i.i.d\ $\eta$ frogs per site and a drift in the $e_1$ direction on $\mathbb Z$ \cite{nina_drift1}. They showed that the process is recurrent if and only if $E \log \eta = \infty$ regardless of the drift.  A followup work by Ghosh et.\ al.\ studied the range of the frog model in the transient case \cite{ghosh_drift}. Similar observations were made by  Rosenberg when the frog paths are Brownian motions in $\mathbb R$ \cite{josh_drift, josh_drift2}. The question is more subtle and challenging in higher dimensions.  D\"obler and Pfeifroth showed that the frog model is recurrent on $\mathbb Z^d$ for $d\geq 2$ so long as $E \log^{(d+1)/2} \eta = \infty$ \cite{doblerdrift}. 
 It was open for some time whether, unlike the $d=1$ case, there is a phase transition as the drift is varied. This was recently answered by D\"obler et.\ al.\ \cite{nina_drift2}. With one sleeping frog at each site of $\mathbb Z^d$, they found that recurrence depends on the strength of the drift with notably different behavior in $d=2$ and $d\geq 3$.
%
We prove here that transience and recurrence of $\FM(d,p)$ also depends on the drift.

Trees are a natural setting to study the frog model with drift, because the graph structure already induces one. Indeed, $\FM_d=\FM(d,1/(d+1))$ is the frog model with simple random walk paths.
Hoffman et.\ al.\ proved that $\FM_2$ is recurrent, but that $\FM_d$ is transient for $d \geq 5$ \cite{HJJ1}. What happens when $d=3$ and $d=4$ for the one-per-site frog model is not currently known. However, followup work by Hoffman et.\ al.\ showed that the frog model with unbiased random walks can be made recurrent for any $d$ so long as  $\Omega(d)$ sleeping frogs are placed at each site \cite{HJJ2, JJ3_log, JJ3_order}. So, there is a phase transition as we change the degree of the tree, or the initial density of asleep frogs.


Since $\FM_d$ is known to be transient for $d \geq 5$, it is natural to ask how much drift 
$$p_d = \inf \{ p \colon \FM(d,p) \text{ is recurrent}\}$$
is needed to make the process recurrent.
From this perspective, the main theorem of \cite{HJJ1} is that $p_2= 1/3$.
In general, we know that $p_d \leq 1/2$, because the initially awake frog will return to the root infinitely often when $p\geq 1/2$.
 A simple argument shows that if $p < 1/(d+1)$ then, even with all frogs initially awake, there are only finitely many expected visits to the root. This immediately gives the bounds $ 1 /({d+1}) \leq p_d \leq \f 12.$

It is not much more difficult to establish a non-vanishing lower bound on $p_d$.
The frog model is dominated by the branching random walk (BRW) on $\mathbb T_d$ in which particles do not branch when moving towards the root (with probability $p$), but split in two when moving away. This corresponds to $\FM(\infty,p)$. This BRW is a common tool for analyzing the frog model. By replacing $1/(d+1)$ with $p$ in the calculation at \cite[Proposition 15]{HJJ2} it follows that the BRW, and thus $\FM(d,p)$, is transient for $p< q^* =(2-\sqrt 2)/4 \approx .1464$. Thus, $p_d \geq q^*$. 
 Our main contribution is an upper bound.
\begin{thm}\thlabel{thm:main}
	$p_d \leq .4155$ for all $d \geq 2$.
\end{thm}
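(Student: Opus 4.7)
The plan is to execute the two-pronged strategy suggested in the abstract: (i) construct a subprocess of $\FM(d,p)$ whose root-visit count stochastically dominates that of a process on $\mathbb{T}_2$, and (ii) give a new, direct proof that $\FM(2,p)$ is recurrent for $p \geq p^\ast$ for some explicit $p^\ast \leq 0.4155$. Combining the two immediately forces $p_d \leq p^\ast$ uniformly in $d \geq 2$, which is the theorem.

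For (i), I would single out two children at every vertex of $\mathbb{T}_d$ and let them generate an embedded binary skeleton $B \cong \mathbb{T}_2$. The subprocess $\SFM(d,p)$ retains only the sleeping frogs lying on $B$, lets awake walkers perform the usual $p$-biased walk on the ambient $\mathbb{T}_d$, and records only those activations occurring at vertices of $B$. Excursions into the $d-2$ non-designated subtrees are treated as ``dead time'' for the skeleton dynamics. Using the strong Markov property, I would argue that, conditioned on the returns to $B$, the induced walk on $B$ is itself a $p$-biased random walk on $\mathbb{T}_2$, so the awakening structure of $\SFM(d,p)$ on $B$ matches that of $\FM(2,p)$. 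Since $\SFM(d,p)$ is a sub-counting of $\FM(d,p)$, recurrence of $\FM(2,p)$ transfers to every $\mathbb{T}_d$ with $d \geq 2$.

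For (ii), I would exploit the self-similarity of $\mathbb{T}_2$. Let $N$ be the number of awake frogs that cross the edge between a fixed root-child $v$ and the root in $\FM(2,p)$. By the strong Markov property, each such crossing initiates a statistically independent copy of the full dynamics rooted at $v$, so $N$ is governed by a branching recursion. Its mean $\mu(p) = \E N$ decomposes into the single-walker return probability $p/(1-p)$ times the expected number of fresh activations produced on each excursion down $\mathbb{T}_2$. Recurrence follows once one checks $\mu(p) > 1$: the count of root-crossings is then stochastically lower-bounded by the total progeny of a supercritical Galton--Watson tree and is therefore a.s.\ infinite. A direct numerical verification at $p = 0.4155$ should close the step, sacrificing the sharpness of \cite{HJJ1} in return for a much shorter argument.

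The main obstacle is step (i). Naively restricting to $B$ loses mass: a walker that steps into a non-designated subtree has positive probability of escaping to infinity there, and that probability depends on $d$, so conditioning on eventual return to $B$ can distort the drift of the skeleton walk in the wrong direction. A likely remedy is to split each trajectory into its upward and downward components --- the $\Iu, \Id, \Ju, \Jd$ counters declared in the preamble hint at precisely such a bookkeeping --- and couple only the portion that deterministically returns to $B$, so that the skeleton dynamics stochastically dominates $\FM(2,p)$ rather than being merely a time-changed version of it. Tuning this split to reproduce exactly the biased walk on $\mathbb{T}_2$, instead of a weakened process that would only be recurrent near $p = 1/2$, is what should buy the specific numerical bound $0.4155$.
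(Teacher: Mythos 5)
Your step (i) is where the argument breaks, and it breaks in the direction opposite to the one you need. If you restrict $\FM(d,p)$ to a binary skeleton $B\subseteq\mathbb T_d$, a walker at a vertex of $B$ that steps away from the root lands outside $B$ with probability $(d-2)/d$, and from there it has positive (and $d$-dependent) probability of never returning to $B$ when $p<1/2$. The induced process on $B$ is therefore a frog model with extra killing: it is \emph{dominated by} $\FM(2,p)$, so recurrence of $\FM(2,p)$ does not transfer to it, and the fact that it is a sub-counting of $\FM(d,p)$ gives an inequality pointing the wrong way. Conditioning on return to $B$ does not rescue this; it discards the escaping mass and distorts the excursion law, as you yourself suspect. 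This is precisely the obstruction the paper is organized around, and its fix is not a skeleton but the recursive frog model $\RFM(d,p)$: paths are downward-loop-erased so that each awake frog first climbs toward the root, reaching its parent with the degree-independent probability $\rho=p/(1-p)$, and then descends forever, dying if it steps down onto a visited site. Because displacements in $\RFM$ no longer depend on the degree, one can couple $\RFM(d,p)$ with a version of $\RFM(d+1,p)$ in which the coupled frog on the larger tree always has strictly more sleeping children available (\thref{lem:RFM'}), yielding the monotonicity $p'_{d+1}\le p'_d$ in the \emph{increasing}-degree direction and hence $p_d\le p'_d\le p'_2$.

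Step (ii) also has a genuine gap. Showing that the mean of the branching recursion exceeds $1$ (equivalently that $\E V_t\to\infty$) does not by itself give almost surely infinitely many root visits: the terms of the recursive distributional equation are not offspring of a Galton--Watson process from which you can extract a surviving tree, and even a valid supercritical comparison would only give survival with positive probability, for a process not covered by the standard $0$--$1$ law. The paper states this explicitly and closes the gap with a second-moment argument: it proves $\sup_t \E V_t^2/(\E V_t)^2<\infty$, applies Paley--Zygmund to get $P(V=\infty)>0$, and then upgrades to probability one by exhibiting infinitely many independent trials along a ray of awake frogs. Note also that the recurrence proof must be carried out for $\RFM(2,p)$ rather than $\FM(2,p)$, since that is the process admitting the cross-degree coupling; this is a harder target than yours because the recursive model removes far more frogs, which is exactly why the bootstrapping of earlier work is replaced by the second-moment method.
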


\begin{proof}
This follows from \thref{lem:recRFM} combined with \thref{prop:mono} and \thref{prop:2}.
\end{proof}

It is interesting to ask how the frog model relates to the dominating BRW. The extra drift the frog model needs to be recurrent, $p_d-q^*$, is one way to measure the difference. By using a BRW that approximates two steps of the frog model, it is not overly taxing to show that $p_d -q^* >0$ for all $d$ (see \cite[Proposition 19]{HJJ1} for an example of a more refined BRW). Since the dominating BRW corresponds in a sense to $d=\infty$, it is natural to ask if $p_d \to q^*$, and, if so, at what rate? The answer is not obvious, as sites near the root gradually get visited. Thus, as time elapses, the frog model branches less. The region with less branching may grow quickly. Hoffman et.\ al.\ proved in \cite{HJJ_shape} that, when the density of frogs is $\Omega(d^2)$, the set of activated sites on the $d$-ary tree contains a linearly expanding ball. Awake frogs in this region cause no branching. We are not sure if this prevents $p_d$ from converging to $q^*$. In fact, we are not sure whether $p_d$ converges at all. This question of convergence is the second reason we are interested in $\FM(d,p)$.

Coupling frog models on different graphs is known to be difficult. Past work  by Fontes et.\ al.\ established that the critical probability for the frog model with death is not monotonic in the graph \cite{mono}. However, Lebensztayn et.\ al.\ in \cite{improved} conjectured that monotonicity holds on regular trees. No coupling has ever been exhibited between the process on different degree trees. This would be nice because it might help understand how the frog model behaves on random trees and investigate the convergence of $p_d$. 

We say that one frog model \emph{dominates} another,  denoted  $\FM(d,p) \preceq \FM(d',p')$, if there is a coupling so that every awake frog in $\FM(d,p)$ is coupled to an awake frog in $\FM(d',p')$ at the same distance from the root. Intuitively, this means that the set of awake frogs in $\FM(d,p)$ can be embedded in the set for $\FM(d',p')$. An immediate consequence of domination is that $V(d,p) \preceq V(d',p')$ in the usual sense of stochastic domination.

 It ought to hold that $\FM(d,p)\preceq \FM(d+1,p)$. This is because the drift is the same, but there are significantly more frogs in the higher degree tree. Despite considerable effort, we were unable to construct such a coupling. It remains an open problem to prove that $p_{d+1} \leq p_d$, and thus that $p_d$ has a limit. Additionally, there is no obvious coupling so that $\FM(d,p)$ visits the root less than $\FM(d,p')$ does when $p<p'$. Although preposterous, we cannot rule out the  possibility $\FM(d,p)$ switches between being transient and recurrent multiple times as we increase $p$. 

The obvious coupling to try between $\FM(d,p)$ and $\FM(d+1,p)$ is to have paired frogs mimic one anothers' displacement from the root, but to move to uniformly chosen vertices when moving away. One can readily find realizations where the frog on $\mathbb T_d$ wakes a new frog, while the coupled frog on $\mathbb T_{d+1}$ does not. This breaks the coupling. We tried several more sophisticated couplings with no luck.

One special case in which a coupling works is for trees in which the degree of the smaller  tree divides that of the larger tree.

\begin{prop} \thlabel{prop:coupling}
$\FM(d,p) \preceq \FM(kd,p)$ for all $k \geq 1$.
\end{prop}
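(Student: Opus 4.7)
The plan is to build an explicit coupling using a block projection $\pi:\mathbb T_{kd}\to\mathbb T_d$. Choose an ordered partition of the $kd$ children of each vertex $u\in\mathbb T_{kd}$ into $d$ blocks of size $k$, so that $\pi$ sends the $i$-th block of $u$'s children to the $i$-th child of $\pi(u)$. This $\pi$ is a depth-preserving surjection, and a $p$-biased random walk on $\mathbb T_{kd}$ projects under $\pi$ to a $p$-biased random walk on $\mathbb T_d$, since a uniform choice among the $kd$ children is the same as a uniform choice of block followed by a uniform choice inside that block. Dually, any $p$-biased walk on $\mathbb T_d$ can be \emph{lifted} to $\mathbb T_{kd}$ via independent uniform-in-$\{1,\dots,k\}$ choices at each away-from-root step.

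I would run $\FM(d,p)$ in the standard way with walks $(Y_v)_{v\in\mathbb T_d}$ and independent lift labels $(I_v)$, and simultaneously construct $\FM(kd,p)$ together with an injection $\sigma$ from eventually-awake $\FM(d,p)$-frogs to $\mathbb T_{kd}$ by induction on wake times. Initialize $\sigma(\rv):=\rv$ and let $F^{kd}_{\rv}$ follow the lift of $Y_{\rv}$ using $I_{\rv}$. When a new $F^d_v$ wakes at time $T_v^d$ because an already-awake $F^d_u$ steps to $v$, define $\sigma(v):=X_{\sigma(u)}(T_v^d)\in\pi^{-1}(v)$, the vertex where the lift of $F^{kd}_{\sigma(u)}$ lands at this step, and declare $F^{kd}_{\sigma(v)}$ awake with walk the lift of $Y_v$ via $I_v$. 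Because $\sigma(v)\in\pi^{-1}(v)$ and fibers of $\pi$ are disjoint, $\sigma$ is automatically injective and depth-preserving; the identity $\pi(X_{\sigma(v)}(t))=Y_v(t)$ ensures each coupled pair sits at the same depth for all $t\ge T_v^d$. For every $w\in\mathbb T_{kd}$ never assigned by $\sigma$, equip $F^{kd}_w$ with an independent $p$-biased walk and let $\FM(kd,p)$'s standard activation rule determine its wake time.

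The main obstacle is verifying that this assembles into a valid joint realization of $(\FM(d,p),\FM(kd,p))$: the family $(X_w)_{w\in\mathbb T_{kd}}$ must be jointly distributed as independent $p$-biased walks, and each linked $F^{kd}_{\sigma(v)}$ must actually wake at time $T_v^d$ so its walk begins at the declared position $\sigma(v)$. The delicate scenario is that an unlinked walk could, in principle, visit $\sigma(v)$ before $T_v^d$ and pre-commit the walk at that vertex. I would handle this by exposing the randomness in two stages---first $(Y_v,I_v)$, which determines $\sigma$ and all linked walks deterministically via the lift rule, and only afterwards drawing fresh independent $p$-biased walks for the unlinked vertices. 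A short induction shows that no linked walk visits $\pi^{-1}(v)$ before $T_v^d$ (otherwise some $Y_{v'}$ would reach $v$ prematurely, contradicting the definition of $T_v^d$), so after revealing $\sigma$ one can couple the unlinked walks with the standard $\FM(kd,p)$ marginal in a way that preserves $T_{\sigma(v)}^{kd}=T_v^d$. Making this two-stage construction rigorous---so that the resulting $(X_w)$ is truly distributed as independent $p$-biased walks starting at each $w$---is the primary technical work, after which the injection $\sigma$ directly witnesses the claimed domination.
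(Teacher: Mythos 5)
Your block projection $\pi$ and the lift of a $p$-biased walk are exactly the embedding the paper uses: the fiber $\pi^{-1}(v)$ is the set of leaves of the $k$-ary tree $\mathbb T_k(L_v)$ built from the blocks $G(x_i)=\{k(x_i-1)+1,\dots,kx_i\}$, and your induction showing that no linked walk enters $\pi^{-1}(v)$ before $T_v^d$ is the same observation the paper extracts from the identity $\mathbb T_{k}(L_v)\cap \mathbb T_{k}(L_{v'}) =\mathbb T_{k}(L_v \cap L_{v'})$. The gap is in your final paragraph. In the true $\FM(kd,p)$, unlinked frogs \emph{do} get woken by your linked walks: whenever $Y_v$ steps down to an already-visited vertex $u$ of $\mathbb T_d$, its lift lands on a uniformly chosen vertex of the block over $u$, which is typically a never-before-visited vertex of $\mathbb T_{kd}$, so the genuine $\FM(kd,p)$ dynamics force the sleeping frog there to wake. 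These unlinked frogs then move freely and, with positive probability, reach $\sigma(w)$ (or exhaust the fiber $\pi^{-1}(w)$) before $T_w^d$. Hence the claim that one can ``couple the unlinked walks with the standard $\FM(kd,p)$ marginal in a way that preserves $T^{kd}_{\sigma(v)}=T_v^d$'' is false: the wake time of the frog at $\sigma(v)$ is a measurable function of the realization of $\FM(kd,p)$, and on a positive-probability event it is strictly smaller than $T_v^d$; no choice of how the remaining randomness is coupled can undo that. Once a linked frog is woken early it is no longer at depth $|v|$ at time $T_v^d$, and your injection $\sigma$ no longer witnesses the domination.

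The repair---and this is what the paper does---is to give up on realizing the exact $\FM(kd,p)$ alongside the injection. Declare the process on $\mathbb T_{kd}$ to be a \emph{restricted} version of $\FM(kd,p)$ in which a frog wakes only when its linked partner in $\FM(d,p)$ wakes; all other wake-ups are suppressed. Your first-visit induction is precisely what makes this restricted process well defined: when $f$ first reaches $v$, no linked frog has previously entered $\pi^{-1}(v)$, so the vertex the lift lands on still carries a sleeping frog, and the new pair can be coupled. The restricted process then carries an exact depth-preserving bijection with the awake frogs of $\FM(d,p)$, and it is dominated by the true $\FM(kd,p)$ by the standard monotonicity of the frog model under suppression of wake-ups. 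The two-step comparison, $\FM(d,p)\preceq$ restricted $\preceq \FM(kd,p)$, is what closes the argument; as written, your construction does not.
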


\noindent The argument relies on a natural way to embed copies of $\mathbb T_{k}$ into  $\mathbb T_{kd}$. It does not appear to generalize to any other degrees. It follows that the subsequences $(p_{d^k})_{k=1}^\infty$ converge to  some $p^*(d)$ for each $d \geq 2$. So, there is a limit along certain subsequences, but we are not sure if $p^{*}(d) = q^*$.

 We make more substantial progress coupling across different graphs with a subprocess of $\FM(d,p)$ that we call the \emph{recursive frog model} $\RFM(d,p)$. It is obtained by trimming and halting the random walk paths of awake frogs. This ensures that $\RFM(d,p)$ visits the root less than $\FM(d,p)$. See Section \ref{sec:RFM} for the formal definition. A related, but slightly different process known as the \emph{self-similar frog model} has been a useful tool for studying recurrence \cite{HJJ2, JJ3_log, HJJ_cover, HJJ_shape, josh_32}.

 Let $p'_d = \inf\{ p \colon \RFM(d,p) \text{ is recurrent}\}$ be the critical drift for the recursive frog model on $\mathbb T_d$. It follows from the dominance relation in \thref{lem:recRFM} that $p_d \leq p'_d$. As mentioned above, the usual frog model is difficult to couple on two trees of different degrees. Finding a coupling for the self-similar frog model also appears challenging. The discussion just after the definition of $\RFM(d,p)$ in Section \ref{sec:RFM} explains why in more detail. It is both useful for our main theorem and of independent interest that there is a coupling where $\RFM(d,p)$ is dominated by $\RFM(d+1,p)$. See \thref{lem:RFM'} for the proof. We use this to show that $p'_d$ is decreasing. 

 \begin{prop} \thlabel{prop:mono}
$p'_{d+1} \leq p'_d$ for all $d \geq 2$.
\end{prop}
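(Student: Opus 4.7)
The plan is to deduce the monotonicity directly from the coupling asserted in \thref{lem:RFM'}, which states that $\RFM(d,p) \preceq \RFM(d+1,p)$. Under this domination, every awake frog in the recursive frog model on $\mathbb T_d$ is paired with an awake frog in the recursive frog model on $\mathbb T_{d+1}$ at the same distance from the root. In particular, whenever a frog in $\RFM(d,p)$ is located at the root, so is its coupled partner in $\RFM(d+1,p)$. Consequently, if $V'(d,p)$ denotes the number of visits to the root in $\RFM(d,p)$, then $V'(d,p)$ is stochastically dominated by $V'(d+1,p)$.

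From here the argument is elementary. Fix any $p > p'_d$. By the definition of $p'_d$ as an infimum, there exists $p'' \in [p'_d, p)$ for which $\RFM(d,p'')$ is recurrent, i.e.\ $V'(d,p'') = \infty$ almost surely. Applying the coupling at parameter $p''$ yields $V'(d+1,p'') \geq V'(d,p'') = \infty$ almost surely, so $\RFM(d+1,p'')$ is recurrent as well. Therefore $p'_{d+1} \leq p'' < p$. Since $p > p'_d$ was arbitrary, we conclude $p'_{d+1} \leq p'_d$.

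Two remarks on why this proof is short. First, no monotonicity in $p$ of the recurrence property is required, which is important because the paper explicitly cautions that such monotonicity is not known. Second, the genuine work lies entirely in establishing the coupling \thref{lem:RFM'}; once that is in hand, the monotonicity in $d$ is immediate. Accordingly, the main obstacle is not in this proposition at all but rather in the construction of the coupling between $\RFM(d,p)$ and $\RFM(d+1,p)$, which must overcome precisely the difficulty flagged for the ordinary frog model in the earlier discussion: a naive embedding that matches displacements from the root and chooses uniform children independently fails to preserve the awake-frog correspondence. The recursive trimming and halting built into the definition of $\RFM$ is what makes such a coupling feasible, and the proof of \thref{lem:RFM'} is where that mechanism is exploited.
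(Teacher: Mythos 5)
Your overall strategy matches the paper's: transfer recurrence from $\mathbb T_d$ to $\mathbb T_{d+1}$ via the coupling of \thref{lem:RFM'}, then conclude with the elementary infimum argument (which you carry out correctly, and you are right that no monotonicity in $p$ is needed). The gap is in how you invoke \thref{lem:RFM'}. That lemma does not state $\RFM(d,p)\preceq\RFM(d+1,p)$; it couples $\RFM(d,p)$ with a \emph{modified} process $\RFM'(d+1,p)$ in which frogs may be removed early. Indeed, a frog on $\mathbb T_{d+1}$ that follows its partner on $\mathbb T_d$ wakes a sleeper, conditionally, with probability $S(f,t)/d$ rather than $S(f',t)/(d+1)$, which amounts to an extra killing probability of $(d+1-S(f',t))/(d(d+1))$ each time it steps away from the root to an unvisited site. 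Recurrence of $\RFM(d,p)$ therefore yields recurrence only of this early-removal process, not of $\RFM(d+1,p)$ itself.

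The missing ingredient is \thref{lem:mono}: any modification of $\RFM(d+1,p)$ with frogs removed early visits each site no more often than the genuine $\RFM(d+1,p)$, so recurrence of the modified process implies recurrence of $\RFM(d+1,p)$. This is not a formality. The paper stresses that this monotonicity is less obvious for the recursive frog model than for $\FM(d,p)$: a frog removed early marks fewer sites as visited, which could in principle let other frogs survive longer, and ruling this out requires the stack-of-instructions argument. Your proof should cite \thref{lem:mono} explicitly at the point where you pass from the coupled modified process to $\RFM(d+1,p)$; with that addition it coincides with the paper's argument.
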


The recursive frog model is useful because a coupling is possible across trees of different degrees. However, the coupling comes at the cost of removing a lot of awake frogs. Because so many frogs are removed, it is not obvious whether $\RFM(d,p)$ is ever recurrent. Old techniques do not apply easily here. We provide a new, simpler argument for recurrence for large enough $p$.
\begin{prop} \thlabel{prop:2}
$p'_2 \leq .4155$.
\end{prop}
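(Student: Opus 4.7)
The plan is to exploit the self-similar structure of $\RFM(2,p)$ to reduce recurrence to an explicit one-variable inequality in $p$. By construction of the recursive frog model, its trimming isolates the sleeping frogs in different subtrees, so the sub-process activated by a single frog entering the root of a subtree of $\mathbb T_2$ is a scaled copy of $\RFM(2,p)$, and the two children subtrees of any vertex evolve independently once activated.

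Let $Y$ be the (possibly infinite) number of awake frogs that, during one such sub-excursion, cross the edge connecting the subtree's root to its parent. By self-similarity, the number of visits $V(2,p)$ to the actual root stochastically dominates a sum of two independent copies of $Y$, and $Y$ itself satisfies a distributional recursion of the form
\[
Y \stackrel{d}{=} B \cdot \Bigl(1 + \sum_{i=1}^{N} Y'_i\Bigr),
\]
where $B\in\{0,1\}$ indicates whether the activating frog itself ever returns to the parent, $N$ counts how many children-of-root subtrees it activates before returning or escaping downward, and the $Y'_i$ are i.i.d.\ copies of $Y$. I would then analyze the associated Galton-Watson process of ``first-return'' frogs and apply the standard dichotomy: once the offspring mean $m(p)$ exceeds $1$, survival gives $P(V(2,p)=\infty)>0$, which the Kosygina-Zerner 0-1 law cited in the introduction upgrades to almost-sure recurrence.

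All ingredients of $m(p)$ are gambler's-ruin quantities for the depth chain, a nearest-neighbor walk on $\mathbb Z_{\ge 0}$ with step $-1$ of probability $p$ and step $+1$ of probability $1-p$. The return probability of a walker started at depth $1$ is $\min(1, p/(1-p))$, and the expected number of distinct depth-$1$ vertices visited before returning or drifting off admits a similar closed form via standard hitting-time generating functions. Plugging these into $m(p)=1$ yields an algebraic equation whose numerical solution should be the threshold $0.4155$.

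The main obstacle will be making the recursion for $Y$ tight enough to attain $0.4155$ rather than a weaker bound. The cleanest recursion above discards contributions from later passes of the initial frog through the edge in question, as well as from $Y'_i$ frogs that cross past the subtree root and keep moving. If this loss is too severe, one has to enlarge the state, for example by tracking joint returns of several frogs or iterating the recursion across two levels of the tree before taking expectations. Once the correct recursion is in hand, the gambler's-ruin identities and the final algebraic step are routine.
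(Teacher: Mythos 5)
Your plan has two genuine gaps, one structural and one at the very end. The structural one: the ``standard dichotomy'' you invoke requires an embedded Galton--Watson process in which distinct individuals produce i.i.d.\ offspring, and no such process is available here. All awake frogs share one environment of sleeping/already-visited sites, so the returns generated by different activated subtrees are strongly dependent; moreover your proposed recursion $Y\overset{d}{=}B(1+\sum_i Y_i')$ is malformed (the woken frogs can return even when the activating frog does not, so the prefactor $B$ cannot multiply the whole sum), and since frogs in $\RFM(2,p)$ are removed upon reaching the root, the second child of the root is never activated, so $V$ does not decompose as two independent copies of $Y$. What the process actually satisfies is a recursive distributional equation one level down, at the first visited child $\overline\rv$ of the root: $V_{t}=\Bin(V_t^x+1,\rho)+\ind{A_t}\Bin(V_t^y,\rho)$ with $V_t^x,V_t^y$ i.i.d.\ copies of $V_{t-1}$. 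From this one gets only the first-moment recursion $EV_{t+1}=\rho(1+P(A_{t+1}))EV_t+\rho$, and divergence of $EV_t$ does \emph{not} by itself yield $P(V=\infty)>0$. The paper closes exactly this gap with a second-moment computation showing $\sup_t EV_t^2/(EV_t)^2<\infty$, followed by Paley--Zygmund; your proposal has no substitute for that step. Note also that the constant $.4155$ does not come from a closed-form gambler's-ruin equation $m(p)=1$: it comes from the numerically verified inequality $\rho(1+P(A_{51}))>1$ for $\rho>.7107$, where $P(A_{t+1})$ is bounded below by $1-\prod_{i=0}^{t}(1-\rho^i(1-\rho)/2)$ using a guaranteed ray of awake frogs, so even the numerics would have to be reworked.

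The second gap is your final step: the Kosygina--Zerner 0--1 law applies to the frog model, not to the recursive frog model, whose extra killing takes it outside the scope of that result. The paper says this explicitly and proves its own 0--1 statement by extending the ray of guaranteed-awake frogs to infinity; each frog on the ray independently branches off and spawns an independent copy of $V$, so $P(V=\infty)>0$ forces $P(V=\infty)=1$. You would need to supply an argument of this kind rather than cite the general 0--1 law.
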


 All previous results that establish recurrence for the frog model on trees rely on bootstrapping a recursive distributional equation involving $V(d,1/(d+1))$. See \eqref{eq:rde} for the equation. The recursive frog model is intuitively less recurrent than the self-similar frog model because more frogs are being removed. So, it is not clear that the bootstrapping approach will work. Fortunately, we find a simpler way to proceed. It starts with the usual recursive distributional equation, but uses the second moment method to finish. This is similar to an argument used to prove that the \emph{parking process} visits the origin infinitely often \cite{parking}. To finish we prove a 0-1 law for the recursive frog model. This is necessary because the recursive frog model is not covered by the 0-1 law in \cite{kosygina01}.




\section{The recursive frog model} \label{sec:RFM}


The \emph{recursive frog model} $\RFM(d,p)$ has awake frogs that move towards the root at each step with probability $\rho = p/(1-p)$ when $p < 1/2$, and with probability $1$ for $p \geq 1/2$. If a frog reaches the root, it is removed. Once a frog moves away from the root, it moves to a uniformly sampled child vertex (possibly the vertex from which it just came) and will thereafter continue to move away from the root to a uniformly sampled child. Frogs are removed if they move away from the root and land on an already-visited site. To make removing frogs well-defined, we at each step sample one frog from the set of awake frogs and have it perform one step. The order frogs move does not change the law, and since the set of awake frogs grows by at most one at each time step, it is easy to see that every awake frog will be sampled infinitely often. $\RFM(d,p)$ earns its name because these modifications allow us to embed a recursive structure. 

Due to these modifications, any frog path in $\RFM(d, p)$ contains two stages. After being woken up, a frog in $\RFM(d, p)$ first moves directly towards the root (stage 1). Then, the frog either hits the root and gets removed or starts taking steps away from the root and gets removed if it hits an already-visited site (stage 2). The number of steps in stage 1 might be zero, in which case upon waking up, the first step of the frog is to move away from the root. If the frog is killed at the root, then stage 2 has zero steps. The path an awake frog follows in $\RFM(d,p)$ comes from the downward-loop-erased random walk of the corresponding frog in $\FM(d,p)$. This is the subrange of a simple random walk that ignores any loops created by steps away from the root.

The self-similar frog model $\SFM(d,p)$ has the same killing rules as $\RFM(d,p)$. The difference is that awake frogs follow the loop-erased path from their corresponding frog in $\FM(d,p)$. This path ignores all loops, not just downward ones. Unlike  $\RFM(d,p)$,  the first step an awake frog in $\SFM(d,p)$ takes away from the root cannot be to the one from which it just came. So an awake frog cannot cause its own death. Intuitively, this makes it so that $\SFM(d,p)$ invokes the killing rule less frequently than $\RFM(d,p)$. The difficulty with coupling $\SFM(d,p)$ and $\SFM(d+1,p)$ is that, even with the same drift, loop-erased random walk behaves differently on $\mathbb T_d$ than on $\mathbb T_{d+1}$. This is because a random walk on $\mathbb T_d$ is more likely to create loops. We could not find a natural way to have the displacement of a frog in $\SFM(d,p)$ align with a frog in $\SFM(d+1,p)$. Consequently, it is hard to see how to couple the two. The advantage of the recursive frog model is that there is a canonical way to align displacements. This is the starting point for our coupling.


We begin by deducing $\RFM(d, p)$ visits the root no more than $\FM(d, p)$.
\begin{lemma}\thlabel{lem:recRFM}
If $\RFM(d, p)$ is recurrent, then $\FM(d, p)$ is recurrent. Hence, $p_d \leq p'_d.$
\end{lemma}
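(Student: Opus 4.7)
The plan is to couple $\FM(d,p)$ and $\RFM(d,p)$ on a single probability space so that every root visit in $\RFM(d,p)$ corresponds to a root visit in $\FM(d,p)$. This yields the stochastic dominance $V'(d,p) \preceq V(d,p)$, where $V'$ denotes the number of root visits in $\RFM$, and then recurrence of $\RFM(d,p)$ immediately forces recurrence of $\FM(d,p)$, and hence $p_d \leq p'_d$.

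To build the coupling, I would equip each vertex $v \in \mathbb T_d$ with an independent $p$-biased random walk, to be used as the trajectory of the frog initially at $v$ in $\FM(d,p)$. On the same randomness, I define $\RFM(d,p)$ by taking each frog's path to be the downward-loop-erasure of its $\FM$ counterpart, truncated by the RFM killing rules (removal at the root and removal upon moving away onto an already-visited site). This matches the dynamics stated in the paper, since the downward-loop-erased walk of a $p$-biased random walk takes towards-root steps with effective probability $\rho = p/(1-p)$.

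The key structural claim is that the set of sites ever occupied by an awake frog in $\RFM(d,p)$ is contained in the analogous set for $\FM(d,p)$. I would prove this by induction on the one-step schedule of RFM. The base case is immediate, since only the root frog is initially awake in each process. For the inductive step, any vertex visited by an RFM frog lies on the downward-loop-erased trajectory of its FM counterpart, and so is visited by that FM frog as well; by the inductive hypothesis that FM frog is awake, so any RFM awakening has already occurred in FM. Each awakened RFM frog reaches the root at most once (after which it is killed), and when this happens the coupled FM trajectory necessarily visits the root too, since loop-erasure preserves endpoints. Therefore $V'(d,p) \leq V(d,p)$ holds pointwise under the coupling, and the lemma follows.

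The main technical point is making the coupling and the induction precise given that RFM and FM schedule frog movements differently — RFM requires the sequential one-frog-at-a-time stepping for the killing rules to be well-defined, while the FM frogs can be viewed as independent random walks. A clean workaround is to first realize the entire $\FM(d,p)$ configuration as an unordered family of random walks, and then carve out $\RFM(d,p)$ as the family of downward-loop-erased initial segments stopped by the killing rules, indexed by the order in which frogs are awakened in RFM. In this time-free formulation the site-containment claim and the dominance $V'(d,p) \leq V(d,p)$ become routine to verify.
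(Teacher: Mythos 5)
Your coupling via downward loop erasure plus the $\RFM$ killing rules is essentially the paper's own argument: the paper simply names the intermediate object, introducing a process $\FM'(d,p)$ in which downward loops are ``silent'' (wake no frogs), observing that $\FM'(d,p)$ is a subprocess of $\FM(d,p)$, and then obtaining $\RFM(d,p)$ from $\FM'(d,p)$ by erasing the silent loops and imposing the extra killing. Your induction on containment of awake/visited sets, and the observation that each $\RFM$ frog's at most one root visit forces a root visit by its coupled $\FM$ frog, supply the same details the paper leaves implicit, so the proposal is correct and follows the same route.
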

\begin{proof}
Consider another modification of the frog model, denoted by $\FM'(d, p)$, in which we impose the rule that if a frog makes a loop $v_{0}\to v_{1} \to \cdots v_{n} \to v_{0}$ with $v_{0}\to v_{1}$ being a step moving away from the root, then the frog does not wake up any sleeping frogs at the sites $v_{1}, \ldots, v_{n}$. Call such ignored loops \emph{silent}. Because the wake-up time for each sleeping frog in $\FM'(d, p)$ is no earlier than the corresponding wake-up time in $\FM(d, p)$, $\FM'(d, p)$ is a subprocess of $\FM(d, p)$, i.e., the set of awake frogs in $\FM'(d, p)$ is a subset of that in $\FM(d, p)$. Furthermore, $\RFM(d, p)$ can be constructed from ${\FM}'(d, p)$ by removing the silent loops and killing frogs whenever they step away from the root and hit a site that has already been visited. It is a simple exercise to prove that the probability a $p$-biased random walk reaches its parent vertex is $\rho$. It follows that the number of visits to the root in $\RFM(d, p)$ is no more than that in ${\FM}'(d, p)$.
\end{proof}

A key monotonicity property in $\FM(d,p)$ is that introducing more killing results in fewer visits to every site. It is not quite as obvious that  $\RFM(d,p)$ also enjoys this property. This is because when we remove a frog $f$ before it is killed via the killing rule of $\RFM(d, p)$, it will visit fewer sites. As a result, $f$ generates fewer already-visited sites, so other frogs might take more steps and survive longer. 
\thref{lem:mono} explains why $\RFM(d,p)$ does in fact have this monotonicity property. For the sake of precision, when we say that a frog is \emph{removed early} this means that the frog is killed before the killing rule in $\RFM(d,p)$ would have killed it (if ever). 

\begin{lemma} \thlabel{lem:mono}
Any modification of $\RFM(d,p)$ with frogs removed early will visit each site of $\mathbb T_d$ no more than the usual $\RFM(d,p)$.
\end{lemma}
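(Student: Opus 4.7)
My plan is to build a sample-path coupling of $A = \RFM(d,p)$ with a modification $B$ (any frogs removed early) so that, site by site, the multiset of visits in $B$ is dominated by that in $A$. First I would reduce to the case of a single early removal by induction: if removing one frog early preserves the site-wise inequality, then iterating the coupling handles arbitrarily many early removals. So assume $B$ is $A$ with a single frog $f$ removed at time $t_0$ strictly before $f$ would be killed by the $\RFM$ rules.

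For the coupling, I would assign each vertex $v$ a prescribed random walk trajectory that any sleeping frog at $v$ uses once awake, and use the same randomness in $A$ and $B$. Both processes agree up to $t_0$. After $t_0$ I would partition the frogs active in $A$ into two classes: (i) the \emph{$f$-cascade}, consisting of $f$ itself together with every frog that $A$ wakes, directly or transitively, because of $f$-cascade activity after $t_0$; and (ii) the remaining frogs, which correspond naturally to the frogs in $B$. Stage 1 of any frog depends only on its own randomness, so paired frogs execute identical stage-1 paths in $A$ and $B$; all divergence between the processes is isolated to stage 2 and to subsequent wakings.

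The invariant I want to maintain at every time $t \ge t_0$ is: for every site $v$, $N_B(v,t) \le N_A(v,t)$, where $N_\cdot(v,t)$ is the number of visits to $v$ by time $t$. The key heuristic is that whenever a paired frog $g$ takes a stage-2 step onto a site $v$ that is visited in $A$ but not in $B$ (so $g$ dies in $A$ but survives in $B$), the only way $v$ became visited in $A$ is through the $f$-cascade. Hence in $A$ the count at $v$ has at least one cascade visit plus the killing visit from $g$, while in $B$ it has only $g$'s visit, preserving the inequality at $v$. Continuing past $v$ in $B$, the subsequent sites $g$ visits are ``new'' for $B$ but must be charged against the cascade's contributions in $A$.

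The main obstacle is closing this charging argument through the whole cascade: a $B$-frog that outlives its $A$-counterpart can wake sleeping frogs that are not woken in $A$, spawning an entire $B$-only sub-cascade with no direct analogue. The heart of the proof is establishing an injection from the extra $B$-only activity (extra visits, extra wakings, and the visits of the frogs woken by them) into the $f$-cascade of $A$, site by site. I would build this injection recursively, exploiting that each site and its descendant subtree can only be reached by a downward path and that, by the invariant, the first frog to visit any site $v$ in $B$ arrives no earlier than the first visit to $v$ in $A$. The inductive step would show that the sub-cascade rooted at a $B$-only waking at $v$ can be matched to the sub-cascade rooted at the cascade visit that first reached $v$ in $A$, allowing the invariant to be propagated. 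Once this pairing is established on each branch of the tree, summing over all sites yields the desired site-wise domination, completing the proof.
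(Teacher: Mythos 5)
There is a genuine gap, and it sits exactly where you flag ``the heart of the proof.'' Your coupling attaches a prescribed trajectory to each sleeping frog (equivalently, to its home vertex) and reuses that randomness in both processes. But a frog that is woken only in the modified process $B$ consumes randomness that is never realized in $A$: its stage-1 upward excursion is an independent geometric-type variable, and nothing in $A$ is forced to match it. Concretely, if in $B$ a surviving frog $g$ pushes past a site $w$ and wakes the frog at a child $w'$ that is never woken in $A$ (because in $A$ the cascade went to a different child $w''$), then the frog at $w'$ may climb ten levels toward the root in $B$ while the frog at $w''$ in $A$ turns around immediately. The pathwise invariant $N_B(v,t)\le N_A(v,t)$ then fails with positive probability under your coupling, so the recursive injection of $B$-only sub-cascades into the $f$-cascade of $A$ cannot be completed site by site; it is not merely left unfinished, it is unavailable with this choice of randomness. (A secondary issue: even where the injection is plausible, you only assert that ``the inductive step would show'' the matching of sub-cascades; that step is the entire difficulty and is never constructed.)

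The paper's proof avoids this by indexing the randomness differently: each \emph{vertex} carries an infinite stack of i.i.d.\ movement instructions, and whichever awake frog is at that vertex consumes the next unused instruction. With site-indexed instructions the process has an abelian flavor: removing a frog early only means that, from that time on, at most as many instructions are used from every stack --- either the removed frog's path is simply never continued, or a later frog arrives and consumes exactly the instructions the removed frog would have, reproducing its path (possibly later in time). Site-wise monotonicity of the number of used instructions, hence of visit counts, is then immediate, with no need to match independent trajectories of frogs that are awake in one process but not the other. If you want to salvage your argument, the fix is essentially to adopt this reindexing, at which point the cascade bookkeeping you describe becomes unnecessary.
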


\begin{proof}
It is equivalent in law to view $\RFM(d,p)$ as a collection of infinite stacks of i.i.d.\ instructions at each vertex. Frogs move according to the instructions. Once used, the instruction is deleted from the stack. When frogs are removed early, at any time step after that, fewer instructions are used from each stack than in $\RFM(d,p)$ without removal. That is, the vertices along the path of the removed-early frog have one extra instruction in their stack. Either these sites remain visited one fewer time, or another frog comes along and continues the path of the removed-early frog. This does not result in more visited sites, only sites visited later in time, or possibly never. Therefore, each site is visited no more often than in the usual $\RFM(d,p)$.
\end{proof}

\section{Proof of \thref{prop:mono}}

The result follows immediately from \thref{lem:mono} and \thref{lem:RFM'}. In the latter, we will couple $\RFM(d, p)$ with a frog process on $\mathbb T_{d+1}$ so that the recurrence of one implies that of the other. The process on $\mathbb T_{d+1}$ is a modified version of $\RFM(d+1,p)$ with the possibility that frogs may be removed early.   By \thref{lem:mono} it follows that $\RFM(d+1,p)$ is also recurrent.

\begin{lemma} \thlabel{lem:RFM'}
$\RFM(d, p)$ can be coupled with a modified version of $\RFM(d+1, p)$ with frogs removed early so that recurrence is equivalent in both models.
\end{lemma}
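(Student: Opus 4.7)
Proof plan: The plan is to use the natural embedding $\iota\colon\mathbb T_d \hookrightarrow \mathbb T_{d+1}$ obtained by designating one ``bonus'' child at each vertex of $\mathbb T_{d+1}$ and identifying the remaining $d$ children with those of the corresponding vertex of $\mathbb T_d$. On this embedded structure I would couple $\RFM(d,p)$ with a modification of $\RFM(d+1,p)$, where the modification consists of removing early every $\RFM(d+1,p)$ frog that the coupling fails to pair with a frog of $\RFM(d,p)$.

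First I would place at each vertex of $\mathbb T_{d+1}$ the instruction stacks used in the proof of \thref{lem:mono}: an up-stack of i.i.d.\ Bernoulli($\rho$) variables and a down-stack of i.i.d.\ Uniform$(\{1,\ldots,d+1\})$ variables. These drive $\RFM(d+1,p)$ directly. To simultaneously drive $\RFM(d,p)$ on $\mathbb T_d$ I would couple by rejection sampling: up-instructions at $v$ and $\iota(v)$ are shared, and for down-instructions, whenever the next $\iota(v)$-down-instruction is $k \leq d$ both frogs use it (moving to the $k$th child in each tree), while if it is $d+1$ the $\mathbb T_{d+1}$-frog moves to the bonus child and the $\mathbb T_d$-frog makes an independent uniform pick on $\{1,\ldots,d\}$. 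I would similarly share the random choice of which awake frog moves next across the two processes.

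The pairing of awake frogs is then defined inductively: the two root-frogs are paired, and every time a paired pair takes a stage-2 step both frogs simultaneously wake new sleeping frogs at destinations lying at the same height from the root~--- at $\iota$-matched vertices with probability $d/(d+1)$, else at a regular child in $\mathbb T_d$ and the bonus child in $\mathbb T_{d+1}$~--- and we pair these newly-woken frogs. After such a ``divergence'', the two paired frogs evolve in disjoint subtrees isomorphic to $\mathbb T_d$ and $\mathbb T_{d+1}$ respectively, and the same coupling is applied recursively inside them. Each bonus subtree of $\mathbb T_{d+1}$ is entered by at most one divergence, because a stage-2 walk in $\iota(\mathbb T_d)$ passes through each vertex at most once and the divergence at $\iota(v)$ can only be triggered by such a walk.

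The main obstacle is verifying that paired stage-2 walks die in lockstep under the coupling. In the ``coupled'' case this should follow inductively: both walks visit $\iota$-matched vertices whose visited statuses in the two processes are equal by induction hypothesis. In the ``divergence'' case the two walks continue in disjoint subtrees, to which the same argument applies after recursing. Combining the resulting bijection between woken frogs (matched by height) with the shared stage-1 Bernoulli($\rho$) randomness within each paired pair yields a one-to-one correspondence between visits to the root in $\RFM(d,p)$ and in the modified process, giving recurrence equivalence. The delicate point is that early removal of the unpaired $\mathbb T_{d+1}$-frogs alters the visited sets seen by paired frogs relative to the unmodified $\RFM(d+1,p)$, so the induction on visited statuses must be carried out using the modified dynamics rather than the original ones; the instruction-stack formalism of \thref{lem:mono} is what keeps this bookkeeping honest.
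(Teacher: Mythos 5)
Your fixed embedding with a designated ``bonus'' child breaks down exactly at the step you flag as the main obstacle, and it breaks in the direction that matters. Over the course of the process many different frogs take stage-2 steps downward from the same vertex $\iota(v)$: every frog whose stage-1 upward walk terminates at $\iota(v)$ consumes a fresh down-instruction there. So the bonus instruction $d+1$ can be drawn at $\iota(v)$ more than once. Your justification that ``a stage-2 walk passes through each vertex at most once'' only rules out the same frog diverging twice at $\iota(v)$; it does not rule out a second frog drawing the bonus instruction after the bonus child has already been visited. When that happens, the $\mathbb T_{d+1}$ frog is killed, while its partner in $\mathbb T_d$ makes an independent uniform pick that may land on an unvisited child and wake a new frog. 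That new frog has no partner, so the number of visits to the root in $\RFM(d,p)$ is no longer dominated by the number in the modified $\RFM(d+1,p)$ --- and this is precisely the direction needed for \thref{prop:mono}: recurrence of $\RFM(d,p)$ must force recurrence of the modified process, so you may freely remove frogs early on the $\mathbb T_{d+1}$ side but can never afford an unpaired awake frog on the $\mathbb T_d$ side. A related asymmetry is that a divergence visits a regular child of $v$ in $\mathbb T_d$ without visiting its $\iota$-image, so your inductive claim that $\iota$-matched vertices have equal visited status is already false; that direction is patchable by further early removals, but the bonus-child collision is not.

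The missing idea is that the vertex correspondence cannot be fixed in advance. The paper's coupling pairs vertices dynamically: when $f$ wakes a frog at a child of $v$, its partner $f'$ moves to a child of $v'$ chosen uniformly among those that \emph{still hold a sleeping frog}, and the invariant maintained is \eqref{eq:sleep-ineq}, namely that every coupled vertex of $\mathbb T_{d+1}$ has exactly one more sleeping child than its partner. This guarantees $f'$ can always wake a frog whenever $f$ does, and the distortion --- $f'$ waking frogs with conditional probability $S(f,t)/d$ rather than $S(f',t)/(d+1)$ --- is absorbed as an extra killing probability, i.e.\ as early removal, which is exactly what the lemma permits. If you try to keep your instruction-stack formalism you would need to let the identity of the ``bonus'' child at each vertex be decided by the dynamics rather than by a fixed embedding, at which point you have essentially reconstructed the paper's argument.
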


\begin{proof}
We call the modified process $\RFM'(d+1,p)$. Frogs in $\RFM'(d+1,p)$ will follow the movement of frogs in $\RFM(d, p)$. 
To do this, we couple each frog $f$ in $\RFM(d, p)$ and its sleeping site $v\in \mathbb T_{d}$ with a unique frog $f'$ in $\RFM'(d+1, p)$ and its sleeping site $v'\in \mathbb T_{d+1}$, using the anotation $f \sim f'$, $v \sim v'$ to denote such coupling. The coupling is constructed recursively.

At time 0, there is a sleeping frog at every vertex of $\mathbb T_{d}$ in $\RFM(d, p)$, and, respectively, $\mathbb T_{d+1}$ in $\RFM'(d+1, p)$. The frog at the root vertex in each model is awake. These two awake frogs are coupled, and so are the two root vertices.
At each step, $t=1,2,\ldots$, a frog is picked uniformly at random from the set of awake frogs in $\RFM(d, p)$ and performs one step of random walk according to the recursive frog model.

Suppose an awake frog $f$ at vertex $v$ is selected. We will assume for now and justify later by induction that $f$ is already coupled with a unique awake frog $f'$ in $\RFM'(d+1, p)$ at some vertex $v'$.
For any vertex $v \in \mathbb T_{d}$ or $\mathbb T_{d+1}$, let $S(v, t)$ denote the number of child vertices of $v$  at which there is a sleeping frog after time step $t$ has occurred. For a frog $f$ we  write $S(f, t)$ for $S(v(f, t), t)$, where $v(f, t)$ is the vertex that frog $f$ lands on at time $t$.  The crux of our induction is that
\begin{align}
S(f, t) +1 &= S(f', t),\text{ for all $t\ge 0$ and all frogs } f \text{ and } f' \text{ with } f\sim f'.
\label{eq:sleep-ineq},
\end{align}
Taking for granted that every awake frog $f$ in $\RFM(d,p)$ is coupled with an awake frog $f'$ and that \eqref{eq:sleep-ineq} holds, we can now define how $f'$ follows $f$:

\begin{enumerate}[label = (\alph*)]
\item \label{rule-parent} If $f$ moves to the parent vertex of $v$, then $f'$ moves to the parent vertex of $v'$.
\item \label{rule-child} If $f$ moves to a child vertex $v_{0}$ of $v$ and wakes up a sleeping frog $g$ at $v_{0}$, then $f'$ moves to a child vertex $v_{0}'$ of $v'$ chosen uniformly from those child vertices with a sleeping frog. $f'$ wakes the sleeping frog (denoted by $g'$) at that site. We couple the frogs
$g$ and $g'$ and the vertices $v_{0}$ and $v_{0}'$.
\item \label{rule-remove} If $f$ is removed, then $f'$ is also removed.
\end{enumerate}

We claim that $f'$ in $\RFM'(d+1, p)$ is moving and waking frogs as it would in $\RFM(d+1,p)$, but with the possibility of being removed early. First of all, rule \ref{rule-parent} ensures that $f'$ moves towards the root with the same probability as an awake frog  would do in $\RFM(d+1,p)$. Secondly, whenever $f$ wakes up a sleeping frog, $f'$ will always wake up one because \eqref{eq:sleep-ineq} ensures $S(f', t) > S(f, t)$. However, in the unmodified process $\RFM(d+1, p)$, given that a frog $f'$ moves away from the root and has $S(f', t)$ sleeping frogs immediately beneath itself, it would wake up a sleeping frog with probability
$$p(f',t):=\f{S(f',t)}{d+1},$$
whereas in $\RFM'(d+1, p)$, since $f'$ follows $f$ in $\RFM(d,p)$, the corresponding conditional probability of waking up a sleeping frog would be 
\begin{align}
\f{S(f,t)}{d} = \f{S(f',t) -1}{d} = p(f',t) - \f{d+1 - S(f',t)}{d(d+1)}. \label{eq:remove}
\end{align}
Note that the above probability is smaller than or equal to $p(f', t)$ since \eqref{eq:sleep-ineq} implies $1 \leq S(f',t) \leq d+1$. Therefore, marginally, $\RFM'(d+1,p)$ is equivalent to a modified version of $\RFM(d+1,p)$ with the additional removal rule that each awake frog $f'$, upon moving away from the root and hitting an unvisited site, is killed with probability $$0\leq \f{d+1 - S(f',t)}{ d (d+1)}\leq \f {1}{d+1}.$$ 

Moreover, so long as \eqref{eq:sleep-ineq} holds, the coupled frogs in $\RFM(d,p)$ and $\RFM'(d+1,p)$ are always in bijection and at the same distance from the root. To see why, notice that whenever a frog wakes up in $\RFM(d, p)$, a sleeping frog must also wake up in $\RFM'(d+1, p)$. These two new frogs become coupled, which, together with the coupling rule \ref{rule-remove},  ensures that there exists a bijection between the set of the awake frogs (and their displacements from the root) in $\RFM(d,p)$ and the set of awake frogs (and their displacements from the root) in $\RFM'(d+1, p)$ at all times.  Therefore, the lemma will be established once we prove \eqref{eq:sleep-ineq}. We mark this proof as complete, and devote the rest of this section to that task.
\end{proof}



A key observation to establishing \eqref{eq:sleep-ineq} is that coupled frogs stay on coupled vertices until they are removed, i.e.,
\begin{lemma} \label{lem:frog-vertex-couple}
If $f\sim f'$, then $v(f, t) \sim v(f', t)$ for all $t \ge 0$.
\end{lemma}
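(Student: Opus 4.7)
My plan is to establish Lemma \ref{lem:frog-vertex-couple} by induction on $t$ with a strengthened inductive hypothesis that tracks vertex couplings alongside the stated frog condition. Specifically, at each time $t$ I would maintain the two statements $(\mathrm{i})_t$: every coupled pair of alive awake frogs $f \sim f'$ satisfies $v(f,t) \sim v(f',t)$; and $(\mathrm{ii})_t$: for every coupled pair of non-root vertices $v \sim v'$, the parent of $v$ is coupled to the parent of $v'$. Both hold vacuously at $t=0$ since the only coupled pair of frogs is the two initial frogs sitting on their respective roots, and the only coupled pair of vertices is the two roots themselves, which are excluded from $(\mathrm{ii})_0$.

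For the inductive step, I would first note that between times $t$ and $t+1$ only the frog $h$ selected in $\RFM(d,p)$ and its coupled partner $h'$ in $\RFM'(d+1,p)$ can change position; every other coupled pair $f \sim f'$ remains at rest, so $(\mathrm{i})_{t+1}$ holds for them by the inductive hypothesis. It then suffices to analyse $h \sim h'$ according to the three coupling rules in the proof of \thref{lem:RFM'}. Under rule \ref{rule-parent}, both frogs move to their parents, and the requirement $v(h,t+1) \sim v(h',t+1)$ is precisely $(\mathrm{ii})_t$ applied to $v(h,t) \sim v(h',t)$; here $v(h,t)$ is non-root because any frog that previously reached the root was already removed. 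Under rule \ref{rule-child}, the woken frogs $g \sim g'$ and their vertices $v_0 \sim v_0'$ are newly declared to be coupled, giving $(\mathrm{i})_{t+1}$ for this pair by fiat, while $(\mathrm{ii})_{t+1}$ for $v_0 \sim v_0'$ holds because the parents are $v(h,t)$ and $v(h',t)$, which are coupled by $(\mathrm{i})_t$. Under rule \ref{rule-remove} no newly coupled entities arise. Since no vertex coupling is ever dissolved, the older entries of $(\mathrm{ii})$ survive automatically.

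The main obstacle I anticipate is the mutual dependence between $(\mathrm{i})$ and $(\mathrm{ii})$: the new vertex couplings created by rule \ref{rule-child} need $(\mathrm{i})_t$ to verify $(\mathrm{ii})_{t+1}$, while movement under rule \ref{rule-parent} needs $(\mathrm{ii})_t$ to verify $(\mathrm{i})_{t+1}$, so neither condition can be inducted in isolation. A minor bookkeeping point is the initial root frog, whose very first step necessarily falls under rule \ref{rule-child} because the root has no parent to move to; this is harmless and is compatible with restricting $(\mathrm{ii})$ to non-root vertex pairs. Once this joint induction is in place, the lemma follows as an immediate corollary of $(\mathrm{i})_t$.
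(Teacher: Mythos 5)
Your proof is correct, and it follows the same overall strategy as the paper: induct over the time steps of the coupling, observe that downward moves create new coupled vertex pairs whose parents are already coupled, and reduce upward moves to the fact that the vertex coupling respects the parent map. The organizational difference is where that last fact lives. You promote it to an explicit invariant $(\mathrm{ii})_t$ carried alongside the frog condition, so that a step under rule \ref{rule-parent} is handled in one line. The paper instead keeps its inductive hypothesis purely about frog positions and, at each upward step, re-derives the parent-compatibility by tracing the chain of wakings from the frog's birth vertex back to the root (the frog that woke $f$ sat at the parent, which is coupled by induction; the frog originally sleeping there was woken by a frog at the grandparent; and so on). Your version buys a shorter and more modular inductive step, and it makes explicit the structural fact that the coupled vertices form isomorphic rooted subtrees of $\mathbb T_d$ and $\mathbb T_{d+1}$; the paper's version keeps the stated hypothesis minimal at the cost of a longer argument inside the induction. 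You also correctly flag the two bookkeeping points (the joint dependence of $(\mathrm{i})$ and $(\mathrm{ii})$, and the exclusion of the root from $(\mathrm{ii})$), neither of which causes a problem since frogs reaching the root are removed and the initial frog's first move falls under rule \ref{rule-child}. Like the paper, you take the executability of rule \ref{rule-child} (that $v'$ always has a sleeping child available) as given; that is the content of \eqref{eq:sleep-ineq}, proved separately, so this is not a gap in your argument.
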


\begin{proof}
Recall that in $\RFM(d, p)$ an awake frog starts by taking steps toward the root (stage 1). After that, the frog is either removed due to hitting the root or begins moving away from the root (stage 2). In stage 2, when a frog moves away from the root, it either wakes up a sleeping frog or hits an already-visited site and gets removed. This means that in the coupling, while two coupled frogs walk away from the root, they couple newly-discovered vertices and frogs on the way (by waking up sleeping frogs) until they are removed. Thus two coupled frogs stay on coupled vertices when they take steps away from the root.

The steps moving toward the root can be taken care of by induction. Suppose the statement holds for all coupled frogs up to time $T$. At the $(T+1)$th step, we consider any two coupled frogs $f$ in $\RFM(d, p)$ and $f'$ in $\RFM'(d+1,p)$, which were initially woken up at some step $t_{0} \le T$ at vertices $v_{0}$ and $v_{0}'$ by two coupled frogs $g$ and $g'$, respectively. Upon waking up, vertices $v_{0}$ and $v_{0}'$ were immediately coupled. Moreover, since $g$ and $g'$ were at the parent vertices (denoted by $v_{1}$ and $v_{1}'$) of $v_{0}$ and $v_{0}'$ at $t_0-1$, then by the inductive hypothesis, $v_{1}$ and $v_{1}'$ must be coupled. Again, there must be two frogs $f_{1}$ and $f_{1}'$ initially sleeping at vertices $v_{1}$ and $v_{1}'$, because otherwise $v_{1}$ and $v_{1}'$ can not be coupled. Now consider the two frogs $f_{1}$ and $f_{1}'$ that are originally woken up at $v_{1}$ and $v_{1}'$ and repeat the argument above. It follows that the parent vertices of $v_{1}$ and $v_{1}'$ must also be coupled. Iterating this argument, we see that the unique path $v_{0}\to v_{1}\to \cdots\to \rootvertex$ from $v_{0}$ to the root in $\RFM(d, p)$ consists of vertices that are coupled with the vertices on the unique path $v_{0}'\to v_{1}'\to \cdots\to \rootvertex'$ from $v_{0}'$ to the root in $\RFM'(d+1, p)$, i.e.,
\[
v_{0}\sim v_{0}',\ v_{1}\sim v_{1}',\ \ldots, \ \rootvertex \sim \rootvertex'
\]
If at the $(T+1)$-th step, $f$ and $f'$ both move toward the root, then $f$ must still be in stage 1. Thus both frogs must move to some coupled vertices $v_{k}$ and $v_{k}'$.
\end{proof}

\begin{lemma}
The relation \eqref{eq:sleep-ineq} holds during the coupling described in \thref{lem:RFM'} .
\end{lemma}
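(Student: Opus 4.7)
The plan is to prove \eqref{eq:sleep-ineq} by induction on $t$, strengthened to the following vertex-level invariant: for every pair of coupled vertices $v\sim v'$ at time $t$, we have $S(v,t)+1=S(v',t)$. Combined with \thref{lem:frog-vertex-couple}, which guarantees that coupled frogs always sit at coupled vertices, this immediately yields \eqref{eq:sleep-ineq}.

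The base case $t=0$ is immediate: the only coupled pair consists of the two roots $\rootvertex \sim \rootvertex'$, whose sleeping children number $d$ and $d+1$ respectively. For the inductive step, I would assume the vertex-level invariant at time $t$ and examine the single update at time $t+1$. A frog $f$ at $v=v(f,t)$ is selected in $\RFM(d,p)$, and by \thref{lem:frog-vertex-couple} its partner $f'$ sits at $v'=v(f',t)$ with $v\sim v'$. The action performed falls into one of three cases corresponding to rules \ref{rule-parent}--\ref{rule-remove}: (a) $f$ and $f'$ move to the parents of $v$ and $v'$; (b) $f$ moves to an unvisited child $u$ of $v$ and wakes a sleeping frog $g$, while $f'$ moves to some unvisited child $u'$ of $v'$ and wakes $g'$, after which we couple $u\sim u'$ and $g\sim g'$; or (c) both $f$ and $f'$ are removed.

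Cases (a) and (c) cause no change in the visited status of any child of any coupled vertex, so all $S$-values are preserved and the invariant holds vacuously at every coupled pair. Case (b) is the only one requiring work, and it splits into two checks. At the old pair $v\sim v'$, both $S(v,\cdot)$ and $S(v',\cdot)$ drop by exactly one (one unvisited child becomes visited on each side), preserving $S(v,t+1)+1=S(v',t+1)$. At the newly coupled pair $u\sim u'$, both vertices are freshly discovered with all their children still sleeping, so the invariant reads $d+1=d+1$. A useful byproduct is that the inductive hypothesis forces $S(v',t)=S(v,t)+1\ge 2$ whenever $f$ finds an unvisited child of $v$, so rule \ref{rule-child} always has an unvisited child of $v'$ at which to send $f'$, and the coupling is well-defined.

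The only conceptual subtlety, rather than a hard obstacle, is the choice to induct on the vertex-level invariant instead of the frog-level statement \eqref{eq:sleep-ineq}. Several coupled frog pairs can share a single coupled vertex pair, and they all read off the same $S$-value; a frog-level induction would have to re-derive the update for every such frog, whereas the vertex-level invariant handles them uniformly. Once the claim is phrased at the vertex level, the induction proceeds mechanically from the coupling rules \ref{rule-parent}--\ref{rule-remove}.
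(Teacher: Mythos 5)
Your proposal is correct and follows essentially the same route as the paper: both reduce \eqref{eq:sleep-ineq} to the vertex-level invariant $S(v,t)+1=S(v',t)$ for coupled vertices via \thref{lem:frog-vertex-couple}, verify it at the roots, and check the same three cases (parent move, waking move, removal) in the inductive step, including the observation that $S(v',t)\ge 2$ keeps rule \ref{rule-child} well-defined.
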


\begin{proof}

With Lemma \ref{lem:frog-vertex-couple}, it is easy to see that
\begin{align}
\label{eq:vertex-ineq}
S(v, t) + 1 = S(v', t) \text{ for all }t\ge 0, v\in \mathbb T_{d}, v'\in\mathbb T_{d+1} \text{ and } v\sim v'
\end{align}
implies \eqref{eq:sleep-ineq}. We now prove that \eqref{eq:vertex-ineq} is preserved following the coupling rules by induction.

At time 0, only the two root vertices  $\rootvertex \in \RFM(d, p)$ and $\rootvertex' \in {\RFM}'(d+1, p)$ are coupled. We have
\[
S(\rootvertex, 0) =  d,\quad S(\rootvertex', 0) = d+1,
\]
and thus \eqref{eq:vertex-ineq} is satisfied. Now suppose \eqref{eq:vertex-ineq} is satisfied up to the $T$-th step for some $T>0$. By the discussion above, this means there is a bijection between awaked, and thus coupled, frogs after step $T$.

At the $(T+1)$th step, suppose an awake frog $f$ at vertex $v$ is picked in $\RFM(d, p)$, and denote its coupled frog in $\RFM'(d+1, p)$ sitting at vertex $v'$ by $f'$. We are done once we show that \eqref{eq:vertex-ineq} continues to hold after any of the three possible moves of $f$:
\begin{enumerate}[(i)]
\item $f$ moves away from the root to an already-visited site. In this case, $f$ is removed and so is $f'$. At the end of step $T+1$, the net change is to remove the pair of coupled and awake frogs $f$ and $f'$. The number of sleeping frogs beneath each vertex is unchanged. So \eqref{eq:vertex-ineq} continues to hold;
\item $f$ moves away from the root to $v_{0}$ with a sleeping frog $g$ and wakes it up. If this occurs, by the inductive hypothesis, it must be the case $S(v', T) = S(v, T)+1 \ge 2$;
According to the coupling rule \ref{rule-child}, $f'$ will wake up a sleeping frog $g'$ at a child vertex $v_{0}'$ of $v'$. We couple $v_{0}$ and $v_{0}'$.  By waking up $g$ and $g'$, we reduce the two numbers $S(v, T+1)$ and $S(v', T+1)$ by one simultaneously and \eqref{eq:vertex-ineq} continues to hold for $v$ and $v'$. In addition, the newly coupled vertices, $v_{0}$ and $v_{0}'$, have all their child vertices unvisited. Thus \eqref{eq:vertex-ineq} is preserved.
\item $f$ moves to the parent vertex of $v$. If this occurs, according to coupling rule \ref{rule-parent}, $f'$ moves to the parent vertex too ($f$ and $f'$ may be removed if $f$ hits the root). The number of sleeping frogs at each vertex does not change and neither do the collection of pairs of coupled vertices.
\end{enumerate}
Thus, \eqref{eq:vertex-ineq} is preserved after one time step, which completes the induction.
\end{proof}

\section{Proof of \thref{prop:2}} \label{sec:binary}

Another advantage of $\RFM(2,p)$ is that the number of visits to the root satisfies a recursive distributional equation. A similar but more complicated equation also holds for larger $d$. See Figure \ref{fig:ssf} for a visual representation of the following notation. Let $\rootvertex$ be the root of $\mathbb T_2$. The frog initially awake at the root will move to one of the two children of the root and then it, or the frog it wakes there, may move down another level.  Call these sites $\overline \rootvertex$ and $x$, respectively. Let $y$ be the sibling vertex of $x$.

Let $V_t$ be the number of visits to the root in $\RFM(2,p)$ with frogs placed at all sites up to distance $t$ from the root and the rest of the sites empty. By similar reasoning as \thref{lem:mono} we have $V_t \preceq V_{t+1}$ (in the usual sense of stochastic dominance) and thus there is a distributional limit $V:=V_\infty$. Let $V^x_t$ and $V^y_t$ be the number of visits to $\overline  \rootvertex $ from $x,y$, given that $x,y$, respectively, are visited. Let $A_t$ be the event that a frog ever enters the subtree rooted at $y$. Although the law for paths in $\RFM(2,p)$ is different than in the self-similar frog model from \cite{HJJ1}, it enjoys the same recursive properties. This is because both processes ($\RFM(d,p)$ and the self-similar frog model) have frogs follow non-backtracking paths and get removed when moving away from the root to already-visited sites.

Using arguments similar to \cite[Proposition 6, Proposition 7]{HJJ1} we have pairwise independence between $\ind{A_t}$ and $V_t^y$, and also that $V_t^y$ and $V_t^x$ are i.i.d.\ with distribution $V_{t-1}$. The law for random walk paths gives a slightly different recursion than at \cite[(2)]{HJJ1}. We have
\begin{align}V_{t} = \Bin(V_t^x +1, \rho) + \ind{A_t}\Bin( V_t^y, \rho). \label{eq:rde} \end{align}
The `$+1$' term in the first binomial variable comes from the frog initially sleeping at $\overline \rootvertex$. Analyzing the first and second moments of this recursive distributional equation is enough to deduce $V$ is infinite for $\rho$ large enough.

    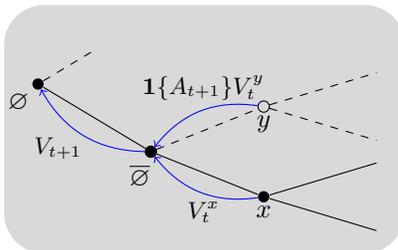
\begin{figure}[ht]
    \begin{center}
\begin{tikzpicture}[scale = 1.5,tv/.style={circle,fill,inner sep=0,
    minimum size=0.15cm,draw}, starttv/.style={circle,fill,inner sep=0,
    minimum size=0.15cm},walklines/.style={very thick, blue,bend left=15}]
\fill[black!15!white,rounded corners=15pt] (-0.3, -1.5) rectangle (3.25, .7);
    \path (0,0) node[starttv] (R) {}
        (1,-0.6) node[tv] (L1) {}
        (1, 0.6)  node (L2) {}
        (2, -0.2) node[circle, draw, inner sep = 0, minimum size = .15cm] (L12) {}
        (2,-1) node[circle, fill, inner sep = 0, minimum size = .15cm] (L11) {}
        (L12)+(1,-0.3) coordinate (L121)
             +(1, 0.3) coordinate (L122)
        (L11)+(1,-0.3) coordinate (L111)
             +(1, 0.3) coordinate (L112);
  \draw
       (R)--(L1);
 \path[every node/.style={font=\sffamily\small}]
       (L1) edge [->,bend left,blue] node[below left = -.1 cm] {\;\;$\textcolor{black}{V_{t+1}}$} (R)
       (L11) edge [->,bend left,blue] node[below] {\;$\textcolor{black}{V_t^x}$} (L1)
       (L12) edge [->,bend right,blue] node[above = .03 cm] {\;$\textcolor{black}{\ind{A_{t+1}}V_t^y}$} (L1);
\node[below left] at (R){$\emptyset$};
\node[below] at (L12){$y$};
\node[below] at (L11){$x$};
\node[below =.05 cm] at (L1){$\overline \emptyset$\;\;\;};
    \draw (L1)--(L11);
    \draw[dashed] (R) -- (.5,.3);
       \draw[dashed] (L1) -- (L12);
  \draw[dashed]
       (L12)--(L122) (L12)--(L121);

  \draw     (L11)--(L112) (L11)--(L111);

\end{tikzpicture}
    \end{center}
    \caption{$V_{t+1}$ is the total number of visits to $\emptyset$ in $\RFM(d,p)$ with sleeping frogs placed to distance $t+1$ from the root. It can be expressed as a binomial thinning of the number of visits to $\overline \rootvertex$. These quantities are i.i.d.\ and distributed like $V_t$.
    }
    \label{fig:ssf}
  \end{figure}

\begin{proof}[Proof of \thref{prop:2}]
Let $x_t = E V_t^2 / (EV_t)^2$. We will prove that $\sup_t x_t = C < \infty$. It follows from the Paley-Zygmund inequality that
\begin{align}
P(V_t > E V_t /2 ) \geq (4 x_t)^{-1} \geq (4C)^{-1} \qquad \text{ for all $t \geq 1$}.\label{eq:V}
\end{align}
We will also show that $EV_t \to \infty$ as $t \to \infty$, and the above line implies $P(V = \infty) > 0$.

Taking expectation in \eqref{eq:rde} and using independence between $\ind{A_{t+1}}$ and $V_{t+1}^y$ gives
\begin{align}E V_{t+1} =  \rho(1 + P(A_{t+1}) )E V_t + \rho. \label{eq:1st}
\end{align}
It is easy to show that $P(A_{t+1}) \geq 1- \prod_{i=0}^{t} (1- \rho^i (1- \rho)/2).$ This is because there always exists a line segment $L_{t+1}$ from $\overline  \emptyset$ to a vertex at distance $t+1$ from the root along which all the frogs have been woken up. For the frog on this ray at distance $i+1$ from the root to visit $y$ it must take $i$ steps toward the root, then move to $y$. This occurs with probability $\rho^i(1-\rho)/2$. Since there are $t$ frogs along this line segment, we obtain the claimed bound on $P(A_{t+1})$ by only considering these frogs guaranteed to be awake.

A computer can easily verify that $\rho(1+P(A_{51})) >1$ for $\rho >.7107.$ Converting from $\rho = p/(1-p)$ back to $p$ implies that   this holds for $p>.4155.$ Using \eqref{eq:1st}, for such $p$ we have $\epsilon >0$ so that $E V_{t+1} \geq (1+ \epsilon) E V_t + \rho$ for $t\geq 50$. It follows that $E V_t$ diverges as $t \to \infty$. This alone is not enough to conclude that the root is visited infinitely often almost surely. To establish this, we need to control the second moment.

Let $X_t = \Bin(V_t^x + 1, \rho)$ and $Y_t= \Bin(V_t^y, \rho)$ so that $V_{t+1} = X_t + \ind{A_{t+1}}Y_t$ and thus
\begin{align}
V_{t+1}^2 = X_t^2 + \ind{A_{t+1}}Y_t^2 + 2 \ind{A_{t+1}} X_t Y_t.
\end{align}
Taking expectations and using independence as well as the bound $\ind{A_t}\leq 1$ we have
\begin{align}
E V_{t+1}^2 \leq E X_t^2 + P(A_{t+1}) E Y^2_t +  2E X_t E Y_t.\label{eq:square}
\end{align}
Using the formula for the second moment of a random sum of i.i.d.\  $Z_i\overset{d} = \Ber(\rho)$
\begin{align*}E \left( \sum_{i=1}^{N} Z_i \right)^2 =  \rho(1-\rho)E N  + \rho^2 EN^2,
\end{align*}
we have $E X_t^2 = \rho(1-\rho) E (V_t+1) + \rho^2 E (V_t+1)^2,$ and similarly $E Y_t^2=\rho(1-\rho) E V_t + \rho^2 E V_t^2$. Plugging these expressions into \eqref{eq:square} and gathering smaller order terms yields
\begin{align}
E V^2_{t+1} \leq \rho^2(1+ P(A_{t+1})) E V_t^2 + 2\rho^2 (EV_t)^2 + O(E V_t). \label{eq:2nd}
\end{align}
Squaring \eqref{eq:1st} and ignoring smaller order terms gives
\begin{align}(E V_{t+1})^2 \geq (\rho(1+P(A_{t+1})) EV_t)^2.\label{eq:2}\end{align}
Recall that $x_t =  EV_t^2 / (E V_t)^2$. Dividing \eqref{eq:2nd} by \eqref{eq:2} gives
$$x_{t+1} \leq \f{1}{1+ P(A_{t+1})} x_t + O(1).$$
Since $1+P(A_{t+1}) \geq 1+\epsilon  >1$ for all $t$, the leading coefficient is less than $1$. This ensures that $\sup_t x_t = C < \infty$ which gives \eqref{eq:V}.

There is a quick way to go from $P(V=\infty)>0$  to $P(V=\infty) = 1$. Recall the definition of $L_{t+1}$ from just below \eqref{eq:1st}. We can extend this to obtain a ray $L$ from the root to $\infty$ with an awake frog at each site. Let $\rootvertex_t$ be the site at distance $t$ on this ray. The awake frog at $\rootvertex_t$ moves to the child $y_t \notin L$ beneath it with probability $(1- \rho)/2$. When this occurs, an independent $V$-distributed number of frogs will visit $\rootvertex_t$. If this quantity is infinite, then $\rootvertex$ is visited infinitely often. Since $P(V=\infty) >0$ and there are infinitely many independent trials along $L$, we must have $V$ is infinite almost surely. 
%
\end{proof}

\section{Proof of \thref{prop:coupling}}

\begin{figure}[ht]
	\includegraphics[width = 11 cm]{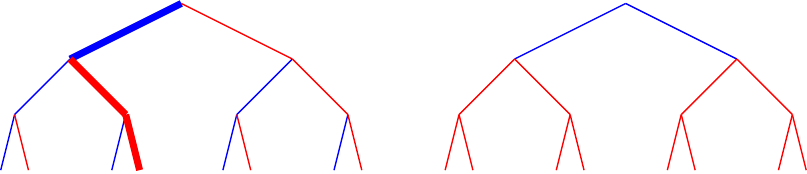}
	\caption{Each segment $L_v$ is associated to a $k$-ary tree in $\mathbb T_{kd}$. The bolded line $L_{122} \subseteq \mathbb T_2$ on the left corresponds to the binary tree in $\mathbb T_4$ shown on the right. The color coding represents the coordinate entries of $v$.} \label{fig:embedding}
\end{figure}

\begin{proof}
We impose coordinates on $\mathbb T_d$ by writing a vertex $v$ at distance $n$ from the root as $v = x_1\cdots x_n$ with $x_i \in \{1,2,\hdots, d\}$. For $1\leq i \leq d$ define the set-valued function $G(i) = \{k(i-1) +1,\hdots,ki\}$. Let $L_v$ be the set of vertices $\rootvertex = v_0,v_1,\hdots, v_n = v$ on the shortest path from the root to $v$. For each $L_v$ we define a subgraph of $\mathbb T_{k}$:
$$\mathbb T_{k}(L_v) = \bigcup_{x_1\cdots x_i \in L_v} G(x_1)\times \cdots \times G(x_i).$$
If $v$ has distance $n$ from the root, then $\mathbb T_{k}(L_v)$ is a $k$-ary tree of height $n$. Call vertices $\{ v' \in \mathbb T_{k} (L_v) \colon |v'| = n\}$ the \emph{leaves}. The embedding is such that for $v, v' \in \mathbb T_d$ we have
\begin{align}
\mathbb T_{k}( L_v)\cap \mathbb T_{k}(L_{v'}) =\mathbb T_{k}(L_v \cap L_{v'}). \label{eq:intersec}	
\end{align}

We will define a modified version of $\FM(kd,p)$ that sometimes removes frogs. Awake frogs $f'$ in the modified version will be coupled to a unique frog $f$ in $\FM(d,p)$. The rules for the coupling are that:
\begin{enumerate}[label=(\roman*)]
	\item If $f$ moves towards the root, then $f'$ moves towards the root.
	\item Suppose that $f$ is at $x_1\cdots x_n$ and $f'$ is at $x_1'\cdots x_n'$. If $f$ moves away from the root to $x_1\cdots x_n x_{n+1}$, then $f'$ moves to a uniformly random vertex in $x_1'\cdots x_n' \times G(x_{n+1})$.
	\item $f'$ only wakes a frog when $f$ does. Upon doing so these newly awakened frogs are also coupled.
\end{enumerate}
These rules ensure that $f$ and $f'$ have the same displacement from the root so Rule (ii) always holds. Moreover, Rule (ii) combined with \eqref{eq:intersec} ensure that the first visit to $v \in \mathbb T_d$ corresponds to the first visit to a leaf of $\mathbb T_k(L_v)$. So, when the frog at $v$ is woken by $f$, then there will be a sleeping frog at whatever leaf vertex of $\mathbb T_k(L_v)$ that $f'$ moves to. It follows that Rule (iii) holds for all steps in the coupling. The resulting process is a restricted version of $\FM(kd,p)$ that dominates $\FM(d,p)$. This gives the claimed result.
%
\end{proof}

\begin{remark}
	This coupling is wasteful; only one of the $k^n$ frogs at the leaves of $\mathbb T_k(L_v)$ can ever be woken. Nonetheless, it is the only  one we could find with $\FM(d,p) \preceq \FM(d',p)$ for some $d< d'$. 
\end{remark}

\section*{Acknowledgements}
\noindent We thank Mina Ossiander  for raising this question at the Oregon State University Probability Seminar in 2015. 
 Yufeng Jiang was partially supported by the 2018 Duke Opportunities in Math program at Duke University.

\bibliographystyle{amsalpha}
\bibliography{frog_paper_cover.bib}

\end{document}